\newcommand{\Hom}{\mbox{Hom}\,}
\newcommand{\Ext}{\mbox{Ext}\,}
\newcommand{\Tor}{\mbox{Tor}\,}
\newcommand{\Ker}{\mbox{Ker}\,}
\renewcommand{\Im}{\mbox{Im}\,}
\newcommand{\pd}{\mbox{pd}\,}
\newcommand{\id}{\mbox{id}\,}
\newcommand{\fd}{\mbox{fd}\,}
\newcommand{\gd}{\mbox{Gdim}\,}
\newcommand{\wdim}{\mbox{wdim}\,}
\newcommand{\gldim}{\mbox{gldim}\,}
\newcommand{\gwd}{\mbox{Gwdim}\,}
\newcommand{\ggldim}{\mbox{Ggldim}\,}
\newcommand{\gpd}{\mbox{Gpd}\,}
\newcommand{\gid}{\mbox{Gid}\,}
\newcommand{\gfd}{\mbox{Gfd}\,}
\newcommand{\GPD}{\mbox{GPD}\,}
\newcommand{\fGPD}{\mbox{fGPD}\,}
\newcommand{\fPD}{\mbox{fPD}\,}
\newcommand{\GID}{\mbox{GID}\,}
\newcommand{\FPid}{\mbox{FP-id}\,}
\newcommand{\fm}{\mathfrak{m}}
\newtheorem{theorem}{Theorem}[section]
\newtheorem{lemma}[theorem]{Lemma}
\newtheorem{proposition}[theorem]{Proposition}
\theoremstyle{definition}
\newtheorem{definition}[theorem]{Definition}
\theoremstyle{remark}
\newtheorem{remark}[theorem]{Remark}
\theoremstyle{Definition and Notation}
\begin{document}
\bibliographystyle{amsplain}

\title[Gorenstein weak dimension ...]{Gorenstein weak dimension of a coherent power series rings}

\author{Najib Mahdou}
\address{Najib Mahdou\\Department of Mathematics, Faculty of Science and Technology of Fez, Box 2202, University S.M. Ben Abdellah Fez, Morocco.}

\author{Mohammed Tamekkante}
\address{Mohammed Tamekkante\\Department of Mathematics, Faculty of Science and Technology of Fez, Box 2202, University S.M. Ben Abdellah Fez, Morocco.}

\author{Siamak Yassemi}
\address{Siamak Yassemi\\Department of Mathematics,
University of Tehran, Tehran, Iran\\ and School of Mathematics,
Institute for research in fundamental sciences (IPM), P.~O.~Box
19395-5746, Tehran, Iran.}


\keywords{Power series rings; classical homological dimensions;
Gorenstein homological dimensions;  (strongly)Gorenstein projective,
injective, and flat module, small (Gorenstein) projective dimension}

\subjclass[2000]{13D05, 13D02}

\begin{abstract}

We compute the Gorenstein weak dimension of a coherent power series
rings over a commutative rings and we show that, in general,
$\gwd(R) \leq 1$ does not imply that $R$ is an arithmetical ring.

\end{abstract}

\maketitle

\section{Introduction}

Throughout this paper, all rings are commutative with identity
element, and all modules are unital.

Let $R$ be a ring, and let $M$ be an $R$-module. As usual we use
$\pd_R(M)$, $\id_R(M)$ and $\fd_R(M)$ to denote, respectively, the
classical projective dimension, injective dimension and flat
dimension of $M$. By $\gldim(R)$ and $\wdim(R)$ we denote,
respectively, the classical global dimension and weak dimension of
R.

For a two-sided Noetherian ring $R$, Auslander and Bridger \cite{Aus
bri} introduced the $G$-dimension, $\gd_R (M)$, for every finitely
generated $R$-module $M$. They showed that there is an inequality
$\gd_R (M)\leq \pd_R (M)$ for all finite $R$-modules $M$, and
equality holds if $\pd_R (M)$ is finite.

Several decades later, Enochs and Jenda \cite{Enochs,Enochs2}
defined the notion of Gorenstein projective dimension
($G$-projective dimension for short), as an extension of
$G$-dimension to modules that are not necessarily finitely
generated, and the Gorenstein injective dimension ($G$-injective
dimension for short) as a dual notion of Gorenstein projective
dimension. Then, to complete the analogy with the classical
homological dimension, Enochs, Jenda and Torrecillas \cite{Eno Jenda
Torrecillas} introduced the Gorenstein flat dimension. Some
references are
 \cite{Bennis and Mahdou1, Christensen, Christensen
and Frankild, Enochs2, Eno Jenda Torrecillas, Holm}.

Recently in \cite{Bennis and Mahdou2}, the authors started the study
of the notions global Gorenstein dimensions of ring $R$, which are
denoted by $\GPD(R)$, $\GID(R)$, and $\gwd(R)$ and defined as
follows:

$\begin{array}{cccc}
  (1)& \GPD(R)& =\, \sup\{ \gpd_R(M)\mid M\, \mbox{be an $R$-module}\} \\
  (2)& \GID(R)& =\,  \sup\{ \gid_R(M)\mid M\,\mbox{be an $R$-module}\} \\
  (3)& \gwd(R)& =\, \sup\{ \gfd_R(M)\mid M\,\mbox{be an $R$-module}\}
\end{array}$

They proved that, for any ring R, $ \gwd(R)\leq \GID(R) = \GPD(R)$
(\cite[Theorems 2.1 and 2.11]{Bennis and Mahdou2}). So, according to
the terminology of the classical theory of homological dimensions of
rings, the common value of $\GPD(R)$ and $\GID(R)$ is called
Gorenstein global dimension of $R$, and denoted by $\ggldim(R)$.
They also proved that the Gorenstein global and weak dimensions are
refinement of the classical global and weak dimensions of rings.
That means $\ggldim(R) \leq\gldim(R)$ (resp. $\gwd(R)\leq
\wdim(R)$), and equality holds if $\wdim(R)$) is finite
(\cite[Propositions 2.12]{Bennis and Mahdou2}).

In \cite{JS} J{\o}ndrup and Small gave a connection between a weak
dimension of a coherent power series ring over a commutative ring
$R$ and the weak dimension of $R$, see also \cite[Theorem 8.1.1]{Glaz}. In the following we recall this result:

\begin{theorem}
Let $R$ be a ring, and let $x$ be an indeterminate over $R$. If
$R[[x]]$ is  a coherent ring, then $\wdim(R[[x]])=\wdim(R)+1$.

\end{theorem}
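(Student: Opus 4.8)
The plan is to establish the two inequalities separately. For the easy direction, $\wdim(R[[x]]) \geq \wdim(R) + 1$: note that $R = R[[x]]/(x)$ and $x$ is a nonzerodivisor on $R[[x]]$, so $R$ has flat dimension exactly $1$ as an $R[[x]]$-module. Given any $R$-module $M$, I would use base-change/rigidity arguments for flat dimension along the quotient $R[[x]] \to R$: since $x$ is $R[[x]]$-regular and also regular (trivially) on the relevant complexes, one gets $\fd_{R[[x]]}(M) = \fd_R(M) + 1$ for every $R$-module $M$ (this is a standard change-of-rings result; see Glaz or the references on flat dimension and regular sequences). Taking the supremum over all $R$-modules $M$ already gives $\wdim(R[[x]]) \geq \wdim(R) + 1$. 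The coherence hypothesis is not needed here.

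**The hard direction.** For $\wdim(R[[x]]) \leq \wdim(R) + 1$, set $n = \wdim(R)$; we may assume $n < \infty$, since otherwise there is nothing to prove. The core idea, going back to Jøndrup–Small, is that coherence of $R[[x]]$ forces $R$ to be coherent as well (a quotient-type argument, or using that $R$ is a direct summand-like retract situation), and more importantly it makes the formation of power series over $R$ interact well with flatness. Concretely, one shows that if $F$ is a flat $R$-module then $F[[x]] \cong R[[x]] \otimes_R F$ when $R[[x]]$ is coherent, and that $R[[x]]$ is a flat $R$-module. Then, starting from a flat resolution of an $R[[x]]$-module, or rather from the structure of finitely presented $R[[x]]$-modules, one produces resolutions of controlled length. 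The precise mechanism: for a finitely presented $R[[x]]$-module $N$, reduce modulo $x$ to get an $R$-module $N/xN$, take an $R$-flat resolution of length $\leq n$, lift it along $R[[x]] \to R$ using the coherence-guaranteed flatness of $R[[x]]$ over $R$ and of power series modules, and assemble a flat resolution of $N$ over $R[[x]]$ of length $\leq n+1$.

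**Reducing to finitely presented modules and executing the lift.** Since $R[[x]]$ is coherent, weak dimension can be computed on finitely presented modules: $\wdim(R[[x]]) = \sup\{\fd_{R[[x]]}(N) : N \text{ finitely presented}\}$. So fix such an $N$. I would build a short exact sequence $0 \to K \to R[[x]]^t \to N \to 0$ with $K$ again finitely presented (coherence), and analyze $\fd_{R[[x]]}(K)$. The key technical lemma to prove is that for a finitely presented $R[[x]]$-module $N$ annihilated appropriately, or for the syzygy modules arising here, $\fd_{R[[x]]}(N) \leq \fd_R(N/xN) + 1$, or some variant controlling flat dimension over $R[[x]]$ by flat dimension over $R$ after killing $x$. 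This rests on: (i) $x$ being a nonzerodivisor on $R[[x]]$ and on free/flat $R[[x]]$-modules; (ii) a Nakayama-type or completeness argument that lets one detect flatness of a finitely presented $R[[x]]$-module by flatness of its reduction mod $x$ over $R$, again using coherence; (iii) the change-of-rings inequality for the regular element $x$.

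**Main obstacle.** The genuinely delicate point is step (ii)-type claim: transferring flatness information across the ring map $R[[x]] \to R$ in the correct direction. Going "down" (restriction along the surjection, or tensoring with $R$) is formal; the subtle part is going "up"—concluding that a finitely presented $R[[x]]$-module has small flat dimension from the knowledge that its reduction mod $x$ does. This is exactly where coherence of $R[[x]]$ is essential (it guarantees kernels of maps between finitely generated flats are finitely generated, so resolutions stay in the finitely presented category, and it ensures $R[[x]]$ behaves like a "nice" flat extension of $R$ at the level of power series modules). I would isolate this as a separate lemma, prove it by inducting on flat dimension using the $x$-regular-element change-of-rings sequence $0 \to N \xrightarrow{x} N \to N/xN \to 0$ (valid since $x$ is a nonzerodivisor on the modules in play) together with the horseshoe-type assembly of flat resolutions, and only then combine everything with Theorem~1.1's easy direction to conclude equality.
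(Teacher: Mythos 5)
You should first note that the paper does not actually prove Theorem 1.1: it is quoted from J{\o}ndrup--Small and Glaz (Theorem 8.1.1), and the closest in-paper argument is the Gorenstein analogue (Lemmas 2.4, 2.5, 2.7, 2.9 and Theorem 2.10), whose architecture your ``hard direction'' correctly mirrors. The genuine gap is in your ``easy direction.'' The equality $\fd_{R[[x]]}(M)=\fd_R(M)+1$ for \emph{every} $R$-module $M$ is not a standard change-of-rings theorem: the first change-of-rings theorem gives it only when $\fd_R(M)<\infty$, and the corresponding general principle (for a ring with a regular element $x$ and a module over the quotient) is false when the quotient dimension is infinite --- e.g.\ $S=\mathbb{Z}$, $x=4$, $M=\mathbb{Z}/2$ has $\fd_S(M)=1$ but $\fd_{S/xS}(M)=\infty$, so finiteness does not descend formally. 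Consequently your supremum argument only yields $\wdim(R[[x]])\geq \sup\{\fd_R(M)+1 : \fd_R(M)<\infty\}$, which can be much smaller than $\wdim(R)+1$: for $R=\mathbb{Z}/4$ (a local quasi-Frobenius ring, exactly the class the paper exploits in Theorem 2.12) every module has flat dimension $0$ or $\infty$, so your argument gives only $\wdim(R[[x]])\geq 1$, whereas the theorem asserts $\wdim(R[[x]])=\infty$. This infinite case is precisely where coherence is used: the paper's Lemma 2.9 proves the lower bound contrapositively --- assume the dimension of $R[[X]]$ equals a finite $n$, work with finitely presented modules, and use a Rees-type isomorphism $\Ext^{n}_{R}(M,R)\cong\Ext^{n+1}_{R[[X]]}(M,R[[X]])$ to bound the dimension of $R$ by $n-1$ --- and the classical proof in Glaz proceeds analogously. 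So you must either prove descent of finiteness along $R[[x]]\to R$ or argue contrapositively; the bare appeal to ``standard change of rings, coherence not needed'' does not close this.

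Your upper bound has the right shape (finitely presented module, syzygy $K$ on which $x$ is a nonzerodivisor, an inductive lemma bounding $\fd_{R[[x]]}(K)$ by $\fd_R(K/xK)$ with a Nakayama-type base case, legitimate because $x$ lies in the Jacobson radical of $R[[x]]$ --- this is exactly Lemma 2.5 and Theorem 2.10 in the Gorenstein setting), but as written it is a sketch, and one stated ingredient is false: for a flat $R$-module $F$ the natural map $R[[x]]\otimes_R F\to F[[x]]$ need not be an isomorphism even when $R$ is a field (take $F$ free of infinite rank), so you cannot lift an arbitrary $R$-flat resolution of $N/xN$ termwise to $R[[x]]$. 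The workable mechanism is the one you mention last: stay inside the category of finitely presented modules (which coherence keeps closed under syzygies, and where flat means projective), and induct on the dimension of the reduction mod $x$.
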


In this paper, we give an extension of Theorem 1.1 to the Gorenstein
weak dimension.

We know that if $\wdim(R) \leq 1$, then $R$ is an arithmetical ring
(see for instance \cite{BG}). Now it is natural to ask the following
question: "Does $\gwd(R) \leq 1$ imply that $R$ is an arithmetical
ring?" In Theorem \ref{Arithmetqiue}, we give a negative answer to
this question. More precisely, we prove: Let $(R,\fm)$ be a local
quasi-Frobenius ring which is not a field. Then $\gwd(R[[X]])= 1$
but $R[[X]]$ is not an arithmetical ring.


\section{Gorenstein weak dimension}

First we recall the notion of strongly Gorenstein projective module
which is introduced in \cite{Bennis and Mahdou1}.

\begin{definition}
 A module $M$ is said to be strongly Gorenstein projective ($SG$-projective for short), if
there exists an exact sequence  of the form:
$$\mathbf{P}=\ \cdots\rightarrow P\stackrel{f}\rightarrow
P\stackrel{f} \rightarrow P\stackrel{f}
         \rightarrow P \rightarrow\cdots$$ where $P$ is a projective $R$-module and $f$ is an endomorphism of $P$, such
          that  $M \cong \Im(f)$ and such that $\mathbf{Hom}(-,Q)$ leaves the sequence $\mathbf{P}$ exact whenever
           $Q$ is a projective module.
\end{definition}

These strongly Gorenstein projective modules has a simple
characterization, and they are used to characterize the Gorenstein
projective modules. We recall the following two results which are \cite[Propositions 2.9]{Bennis and Mahdou1} and \cite[Theorem 2.7]{Bennis and Mahdou1}:

\begin{proposition}
A module M is strongly Gorenstein projective if, and only if, there
exists a short exact sequence of modules: $$0\longrightarrow M
\longrightarrow P \longrightarrow M \longrightarrow 0$$ where $P$ is
projective and $\Ext(M,Q) = 0$ for any projective module $Q$.

\end{proposition}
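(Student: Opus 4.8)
The plan is to prove the equivalence by passing, in both directions, between the doubly infinite complex of Definition 2.1 and a single short exact sequence; the only nontrivial tool needed is the long exact sequence of $\Ext$.

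For the forward implication, I would start from an $SG$-projective module $M$, presented by an exact complex $\mathbf{P}=\cdots\to P\stackrel{f}{\rightarrow}P\stackrel{f}{\rightarrow}P\to\cdots$ with $P$ projective, $M\cong\Im(f)$, and $\Hom(\mathbf{P},Q)$ exact for every projective $Q$. Exactness of $\mathbf{P}$ forces $\Ker(f)=\Im(f)\cong M$ at each spot, so factoring one copy of $f$ through its image gives the short exact sequence $0\to M\to P\to M\to 0$. Then I would truncate $\mathbf{P}$ on the right to obtain a projective resolution $\cdots\to P\stackrel{f}{\rightarrow}P\stackrel{f}{\rightarrow}P\to M\to 0$ of $M$, apply $\Hom(-,Q)$, and read off $\Ext(M,Q)=0$ from the assumed exactness of $\Hom(\mathbf{P},Q)$.

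For the converse, I would begin with an exact sequence $0\to M\stackrel{\iota}{\rightarrow}P\stackrel{\pi}{\rightarrow}M\to 0$ with $P$ projective and $\Ext(M,Q)=0$ for all projective $Q$, and set $f:=\iota\pi\colon P\to P$. Since $\Im(f)=\Im(\iota)\cong M$ and $\Ker(f)=\Ker(\pi)=\Im(\iota)$, one has $\Ker(f)=\Im(f)$, so $\mathbf{P}=\cdots\to P\stackrel{f}{\rightarrow}P\stackrel{f}{\rightarrow}P\to\cdots$ is exact. The remaining point---that $\Hom(\mathbf{P},Q)$ is exact for every projective $Q$---I would settle as follows: apply $\Hom(-,Q)$ to the short exact sequence; since $\Ext(M,Q)=0$, the map $\pi^{\ast}$ is injective, $\iota^{\ast}$ is surjective, and $\Im(\pi^{\ast})=\Ker(\iota^{\ast})$. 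Because $f^{\ast}=\pi^{\ast}\iota^{\ast}$, surjectivity of $\iota^{\ast}$ yields $\Im(f^{\ast})=\Im(\pi^{\ast})$ and injectivity of $\pi^{\ast}$ yields $\Ker(f^{\ast})=\Ker(\iota^{\ast})$, hence $\Im(f^{\ast})=\Ker(f^{\ast})$, i.e.\ $\Hom(\mathbf{P},Q)$ is exact. If $\Ext(M,Q)=0$ is understood as vanishing in all positive degrees, I would add the remark that $0\to M\to P\to M\to 0$ gives $\Ext^{i+1}(M,Q)\cong\Ext^{i}(M,Q)$ for $i\geq 1$, so vanishing of $\Ext^{1}$ already implies it.

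The argument is largely bookkeeping with the complex and the two copies of $M$; the one step that needs genuine care is verifying exactness of the dualized complex in the converse, where the map-level identities $\Im(f^{\ast})=\Im(\pi^{\ast})$ and $\Ker(f^{\ast})=\Ker(\iota^{\ast})$ must be matched up correctly against the long exact $\Ext$-sequence. I do not anticipate any deeper obstruction.
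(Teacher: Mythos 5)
Your argument is correct: both directions are handled properly, and the key converse step (setting $f=\iota\pi$, splicing the short exact sequence into a doubly infinite complex, and checking exactness of $\Hom(\mathbf{P},Q)$ via $\Im(f^{\ast})=\Im(\pi^{\ast})=\Ker(\iota^{\ast})=\Ker(f^{\ast})$) is exactly the standard splicing proof. Note that this paper only recalls the proposition from \cite[Proposition 2.9]{Bennis and Mahdou1} without proof, and your argument coincides with the one given there, so there is nothing to add beyond confirming it.
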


\begin{theorem} A  module is Gorenstein projective
if, and only if, it is a direct summand of a strongly Gorenstein
projective  module.
\end{theorem}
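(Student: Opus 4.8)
The statement is an equivalence, so the plan is to prove the two implications separately; the forward implication is the one carrying real content.

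\emph{If $M$ is a direct summand of a strongly Gorenstein projective module, then $M$ is Gorenstein projective.} Let $N$ be strongly Gorenstein projective, so $N\cong\Im(f)$ for an endomorphism $f$ of a projective module $P$ with the properties in the definition above. Then $\mathbf{P}:\ \cdots\lo P\st{f}\lo P\st{f}\lo P\lo\cdots$ is an exact complex of projectives that $\Hom(-,Q)$ leaves exact for every projective $Q$, and $N$ is a kernel in it; hence $N$ is Gorenstein projective. Since the class of Gorenstein projective modules is closed under direct summands (see \cite{Holm}), every direct summand of $N$ is Gorenstein projective.

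\emph{If $M$ is Gorenstein projective, then $M$ is a direct summand of a strongly Gorenstein projective module.} Fix a complete projective resolution of $M$, that is, a $\mathbb{Z}$-indexed exact complex of projectives
$$\mathbf{C}:\quad \cdots\lo C_{n+1}\st{\partial_{n+1}}\lo C_n\st{\partial_n}\lo C_{n-1}\lo\cdots$$
that $\Hom(-,Q)$ leaves exact for every projective $Q$ and with $M\cong\Ker(\partial_0)$. Set $Z_n:=\Ker(\partial_n)=\Im(\partial_{n+1})$ for each $n$; re-indexing $\mathbf{C}$ shows that every $Z_n$ is again Gorenstein projective, and since the Gorenstein projective modules are closed under arbitrary direct sums, $N:=\bigoplus_{n\in\mathbb{Z}}Z_n$ is Gorenstein projective. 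Because $M=Z_0$ is evidently a direct summand of $N$, it suffices to prove that $N$ is strongly Gorenstein projective.

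To this end put $\widetilde{C}:=\bigoplus_{n\in\mathbb{Z}}C_n$, a projective module, and let $f:\widetilde{C}\to\widetilde{C}$ be the shift endomorphism $(x_n)_n\mapsto(\partial_{n+1}(x_{n+1}))_n$. Using exactness of $\mathbf{C}$ in each degree one checks directly that $\Ker(f)=\bigoplus_n\Ker(\partial_n)=\bigoplus_n\Im(\partial_{n+1})=\Im(f)=N$, so $\cdots\lo\widetilde{C}\st{f}\lo\widetilde{C}\st{f}\lo\widetilde{C}\lo\cdots$ is an exact $2$-periodic complex of projectives with image $N$. Applying $\Hom(-,Q)$ for a projective module $Q$ produces a $\mathbb{Z}$-indexed sequence with terms $\prod_n\Hom(C_n,Q)$ and differential a shift built from the maps induced by the $\partial_n$; its exactness is precisely the exactness of each $\prod_n\Hom(C_n,Q)$ appearing in $\Hom(\mathbf{C},Q)$, which holds because products of exact sequences of abelian groups are exact. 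Thus $f$ exhibits $N$ as strongly Gorenstein projective (via the characterization recalled above), which finishes the argument.

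The one place where care is really needed is this last verification: after ``folding'' the two-sided complete resolution into a single periodic complex, one must confirm that both exactness and $\Hom(-,\text{projective})$-exactness survive the passage to the infinite direct sum $\widetilde{C}$ and to the corresponding infinite product of $\Hom$-groups. The closure of the Gorenstein projective class under direct summands and under arbitrary direct sums are the external inputs I would state and cite before starting.
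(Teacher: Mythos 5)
Your proof is correct, and it is essentially the standard argument: the paper itself gives no proof of this statement (it is quoted from Bennis--Mahdou, \emph{Strongly Gorenstein projective, injective, and flat modules}, Theorem 2.7), and your construction --- folding a complete projective resolution $\mathbf{C}$ into the $2$-periodic complex on $\bigoplus_n C_n$ with the shift map, checking $\Ker(f)=\Im(f)=\bigoplus_n Z_n$, and using exactness of products in the category of abelian groups to verify the $\Hom(-,Q)$ condition --- is precisely the idea of the original proof. The only remark is that your intermediate claim that $N=\bigoplus_n Z_n$ is Gorenstein projective (via closure under direct sums) is redundant, since you then show directly that $N$ is strongly Gorenstein projective.
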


\begin{lemma}\label{lemma x is nonzero}
Let $R$ be a ring and let $X$ be an indeterminate over  $R$ and $M$
an $R[[X]]$-module. Then $X$ is a nonzero divisor on $M$ if and
only if $\Tor_{R[[X]]}(M,R)=0$.
\end{lemma}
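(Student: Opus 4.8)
The plan is to exploit the fact that $X$ is a nonzero divisor on the power series ring $R[[X]]$ itself, so that multiplication by $X$ yields a short exact sequence of $R[[X]]$-modules
$$0 \lo R[[X]] \stackrel{X}{\lo} R[[X]] \lo R \lo 0.$$
Since $R[[X]]$ is free over itself, this is a free resolution of the $R[[X]]$-module $R = R[[X]]/(X)$, of length one. The only small verification needed here is the injectivity of multiplication by $X$ on $R[[X]]$: if $f = \sum_{i \geq k} a_i X^i$ with $a_k \neq 0$, then $Xf$ has $a_k X^{k+1}$ as its lowest-order nonzero term, hence $Xf \neq 0$.

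Next I would compute $\Tor^{R[[X]]}_\bullet(M,R)$ by applying $M \otimes_{R[[X]]} -$ to this resolution and deleting the augmentation term $R$. Using the canonical isomorphism $M \otimes_{R[[X]]} R[[X]] \cong M$, the resulting complex is simply
$$0 \lo M \stackrel{X}{\lo} M \lo 0,$$
with the two copies of $M$ sitting in homological degrees $1$ and $0$. Taking homology gives $\Tor^{R[[X]]}_0(M,R) \cong M/XM$, $\Tor^{R[[X]]}_1(M,R) \cong \Ker(M \stackrel{X}{\lo} M) = (0 :_M X)$, and $\Tor^{R[[X]]}_i(M,R) = 0$ for all $i \geq 2$, since the resolution has length one.

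It then remains only to read off the equivalence. Because the higher Tor modules vanish automatically, the condition $\Tor_{R[[X]]}(M,R) = 0$ amounts to $\Tor^{R[[X]]}_1(M,R) = 0$, i.e. to $(0 :_M X) = 0$, which is precisely the assertion that $X$ is a nonzero divisor on $M$. There is no genuine obstacle in this argument: everything is forced once one writes down the two-term free resolution of $R$ over $R[[X]]$, and the only care required is the bookkeeping of homological degrees together with the elementary check that $X$ acts injectively on $R[[X]]$.
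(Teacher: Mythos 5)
Your argument is correct and is essentially the paper's own: both proofs rest on the two-term free resolution $0 \to R[[X]] \stackrel{X}{\lo} R[[X]] \lo R \lo 0$ and the identification of the induced map on $M \otimes_{R[[X]]} R[[X]] \cong M$ with multiplication by $X$, so that $\Tor_1^{R[[X]]}(M,R) \cong (0:_M X)$. The only cosmetic difference is that you compute Tor directly from the deleted resolution while the paper reads it off the long exact sequence obtained by tensoring the short exact sequence with $M$; the conclusion is identical.
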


\begin{proof}  Let $\varphi_X: M\longrightarrow M$ be the homomorphism of $R[[X]]$-module such that $\varphi_X(m)=Xm$ for every $m\in M$. Consider the short exact sequence of $R[[X]]$-modules
$$(\star) \quad 0 \longrightarrow R[[X]] \stackrel{\mu_X}\longrightarrow
R[[X]] \longrightarrow R\cong R[[X]]/XR[[X]] \longrightarrow 0$$
where $\mu_X$ is the multiplication by $X$. The following sequence
is induced from $(\star)$
$$0\longrightarrow \Tor_{R[[X]]}(M,R) \longrightarrow R[[X]]\otimes_{R[[X]]}
M \stackrel{1_M\otimes \mu_X}\longrightarrow R[[X]]\otimes_{R[[X]]}
M \longrightarrow R\otimes_{R[[X]]}M \longrightarrow 0$$ By
\cite[Theorem 8.13]{Rotman} the $R[[X]]$-morphism  $1_M\otimes
\mu_X$ is multiplication by $X$. So, in the following diagram all
squares are commutative

$$\begin{array}{ccccccccc}
  0\rightarrow &\Tor_{R[[X]]}(M,R) & \rightarrow & R[[X]]\otimes_{R[[X]]}M & \stackrel{1_M\otimes \mu_X}\rightarrow& R[[X]]\otimes_{R[[X]]}M & \rightarrow & R\otimes_{R[[X]]}M \rightarrow 0 &  \\
   &  &  & \wr \parallel &  & \wr \parallel &  & \wr \parallel &  \\
 0 \rightarrow & Ker(\varphi_X) & \rightarrow & M & \stackrel{\varphi_X}\rightarrow & M & \rightarrow & M/XM \quad \rightarrow 0&  \\
\end{array}$$

Therefore, $\Ker(\varphi_X)\cong \Tor_{R[[X]]}(M,R)$ and hence $X$
is a nonzero divisor on $M$ if, and only if, $\Tor_{R[[X]]}(M,R)=0$.
\end{proof}

\begin{lemma}\label{thm coherent M/XM}
Let $R$ be a  ring  and $X$ an indeterminate over $R$ such that
$R[[X]]$ is coherent. If $M$ is a finitely presented $R[[X]]$-module
such that $X$ is a nonzero divisor  on $M$ then
$\gpd_{R[[X]]}(M)\leq  \gpd_R(M/XM)$.
\end{lemma}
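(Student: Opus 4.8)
The plan is to reduce the Gorenstein projective dimension over $R[[X]]$ to that over $R$ by a change-of-rings argument along the regular element $X$, following the classical template for the inequality $\pd_{R[[X]]}(M) \le \pd_R(M/XM)$. Set $S = R[[X]]$ and $\overline{M} = M/XM$; note that $\overline{M}$ is a finitely presented $R$-module since $M$ is a finitely presented $S$-module and $S/XS \cong R$. We may assume $n := \gpd_R(\overline{M}) < \infty$, otherwise there is nothing to prove. The goal is then to show $\gpd_S(M) \le n$.

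First I would record the basic ingredients. Since $X$ is a nonzero divisor on $M$, Lemma \ref{lemma x is nonzero} gives $\Tor_1^S(M,R) = 0$, and more generally the short exact sequence $(\star)$ together with a dimension-shift shows $M/XM$ has finite Gorenstein projective dimension as an $R$-module while $M$ is ``unramified'' along $X$ in the sense needed for the Rees-type lemma. The engine of the proof is the change-of-rings result for Gorenstein projective dimension along a regular element: if $x$ is a nonzerodivisor on both $S$ and on the $S$-module $M$, then $\gpd_{S/xS}(M/xM) \ge \gpd_S(M)$ whenever the left side is finite (this is the Gorenstein analogue of the classical statement; one proves it by induction on $n = \gpd_{S/xS}(M/xM)$). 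For the base case $n = 0$, one must show: if $M/XM$ is Gorenstein projective over $R$ and $X$ is $M$-regular, then $M$ is Gorenstein projective over $S$. Using Theorem \ref{thm coherent M/XM}'s companion characterizations — that a module is Gorenstein projective iff it is a direct summand of a strongly Gorenstein projective module (the recalled \cite[Theorem 2.7]{Bennis and Mahdou1}), and the short-exact-sequence criterion for strongly Gorenstein projective (Proposition 2.4 in the excerpt) — one lifts a defining sequence $0 \to \overline{M} \to \overline{P} \to \overline{M} \to 0$ over $R$ to a corresponding sequence over $S$; the coherence of $S$ guarantees the relevant finitely generated projective modules behave well, and the vanishing $\Ext_S^{>0}(M, Q) = 0$ for projective $S$-modules $Q$ follows from the $R$-level vanishing via the standard spectral-sequence / Rees change-of-rings comparison $\Ext_S^i(M,Q) \cong \Ext_R^i(\overline{M}, Q/XQ)$ (valid because $X$ is regular on both $M$ and $Q$).

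For the inductive step, take a short exact sequence $0 \to K \to P \to M \to 0$ of $S$-modules with $P$ finitely generated projective. Since $X$ is regular on $M$ and on $P$, it is regular on $K$, and the sequence remains exact after $- \otimes_S R$, yielding $0 \to K/XK \to P/XP \to M/XM \to 0$ over $R$ with $P/XP$ projective over $R$; hence $\gpd_R(K/XK) \le \max\{n-1, 0\}$, and by induction $\gpd_S(K) \le n - 1$, so $\gpd_S(M) \le n$. Throughout, finite presentation of $M$ over the coherent ring $S$ is what allows us to stay inside the category of finitely presented modules so that the strongly-Gorenstein-projective machinery of \cite{Bennis and Mahdou1} applies and projectives can be taken finitely generated.

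The main obstacle I anticipate is the base case: proving that $M$ is Gorenstein projective over $S$ from Gorenstein projectivity of $M/XM$ over $R$. The difficulty is twofold — one must produce the ``complete projective resolution'' of $M$ over $S$ by lifting the one for $M/XM$ (which requires knowing that the kernels appearing upstairs are again $X$-torsion-free so that reduction mod $X$ is exact, and that lifted projectives can be chosen finitely generated using coherence), and one must verify the Ext-orthogonality $\Ext_S^i(M, Q) = 0$ for all projective $S$-modules $Q$ and all $i \ge 1$, for which the clean route is the change-of-rings isomorphism $\Ext_S^i(M,Q) \cong \Ext_R^i(M/XM, Q/XQ)$ coming from the fact that $X$ is simultaneously regular on $M$ and $Q$ — a point where one should be careful that $Q$ ranges over arbitrary (not just finitely generated) projectives, though since $M$ is finitely presented this reduces to the finitely generated case. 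Once the base case is secured, the inductive dimension-shifting step is routine.
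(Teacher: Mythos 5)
Your inductive step is exactly the paper's: resolve $M$ by a finitely generated projective, use $\fd_{R[[X]]}(R)\le 1$ and Lemma \ref{lemma x is nonzero} to see that $X$ is regular on the kernel, reduce mod $X$, and apply the induction hypothesis. The gap is in the base case, which you correctly identify as the crux but then dispatch with an invalid change-of-rings isomorphism. The statement $\Ext^i_{R[[X]]}(M,Q)\cong \Ext^i_R(M/XM,Q/XQ)$ ``because $X$ is regular on both $M$ and $Q$'' is not a theorem: the genuine second change-of-rings isomorphism requires the \emph{second} argument to be a module over the quotient ring, i.e.\ it gives $\Ext^i_{R[[X]]}(M,N)\cong \Ext^i_R(M/XM,N)$ for $R$-modules $N$ such as $N=Q/XQ$, not for $Q$ itself. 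A counterexample to your version (with a general regular element in place of $X$): $S=\mathbb{Z}$, $x=p$, $M=\mathbb{Z}\oplus \mathbb{Z}/q\mathbb{Z}$ with $q$ coprime to $p$, $Q=\mathbb{Z}$; then $x$ is regular on $M$ and on $Q$, $M/xM$ is projective over $\mathbb{Z}/p\mathbb{Z}$, yet $\Ext^1_{\mathbb{Z}}(M,\mathbb{Z})\cong\mathbb{Z}/q\mathbb{Z}\neq 0$ and $M$ is not Gorenstein projective. This also shows the base-case implication ``$M/XM$ Gorenstein projective and $X$ regular on $M$ $\Rightarrow$ $M$ Gorenstein projective'' is false for a general regular element: it needs $X$ to lie in the Jacobson radical, a hypothesis your argument never uses.

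What the correct argument requires, and what is missing from your sketch, is the Nakayama step: from $\Ext^i_{R[[X]]}(M,Q/XQ)\cong\Ext^i_R(M/XM,Q/XQ)=0$ and the long exact sequence attached to $0\to Q\stackrel{X}\to Q\to Q/XQ\to 0$ one only gets that multiplication by $X$ is surjective on $\Ext^i_{R[[X]]}(M,Q)$ for $i\ge 1$; vanishing then follows from Nakayama's Lemma because $X$ lies in the Jacobson radical of $R[[X]]$ and these Ext modules are finitely generated (here is where coherence, finite presentation of $M$, and the reduction to finitely generated $Q$ are actually needed). This is precisely how the paper handles the base case, by adapting Christensen's Corollary 1.4.6 with ``local'' replaced by ``$X$ in the Jacobson radical,'' via the characterization in Enochs--Jenda, Proposition 10.2.6. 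A further soft spot in your sketch: since $M/XM$ is only Gorenstein projective, it is merely a direct summand of a strongly Gorenstein projective module, so a defining sequence $0\to M/XM\to \overline{P}\to M/XM\to 0$ need not exist for $M/XM$ itself, and lifting a complete resolution along $R[[X]]\to R$ back to a resolution of $M$ (rather than of $M/XM$) is nontrivial and is not carried out. Repair the base case along the Nakayama lines above and the rest of your proof matches the paper's.
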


\begin{proof} First note that $R$ is a coherent ring
 by \cite[Theorem 4.1.1(1)]{Glaz}. In addition, $X$ is contained
in the Jacobson radical of $R[[X]]$. Let $M$ be a finitely presented
$R[[X]]$-module $M$ over which $X$ is a nonzero divisor and put $n=\gpd_R(M/XM)$. We may assume that $n$ is finite.\\
 The proof will be by induction on $n$.\\
 If  $M/XM$ is a
Gorenstein projective $R$-module, then by using \cite[Proposition
10.2.6 (1)$\Leftrightarrow$(10)]{Enochs relative}, the proof  is the
same as the one of \cite[Corollary 1.4.6]{Christensen} (note that $X$ is an element of
 the Jacobson radical of $R[[X]]$ and so we may use the Nakayama's Lemma in the proof of  \cite[Corollary 1.4.6]{Christensen}. In the original proof we use the \emph{Local} condition) .\\
Now, assume that $n>0$ and consider the short exact
 sequence of $R[[X]]$-modules  $0 \longrightarrow G \longrightarrow P \longrightarrow
 M\longrightarrow 0$ where $P$ is a finitely presented  projective $R[[X]]$-module. Using \cite[Theorem 2.5.1]{Glaz},  $G$ is also  finitely presented since $R[[X]]$ is coherent.
 From Lemma \ref{lemma x is nonzero}, we have $\Tor_{R[[X]]}(M,R)=0$ since $X$ is
 a nonzero divisor on $M$. In addition, $\Tor_{R[[X]]}(P,R)=0$ since $P$ is a projective $R[[X]]$-module. Therefore, $\Tor_{R[[X]]}(G,R)=0$ (since $\fd_{R[[X]]}R\leq 1$). So, by Lemma \ref{lemma x is
 nonzero}, $X$ is a nonzero divisor on $G$. On the other hand, if we tensor  the short exact sequence
 above with $-\otimes_{R[[X]]}R$ we obtain a short exact sequence
 $$0\longrightarrow G/XG \longrightarrow P/XP \longrightarrow M/XM
 \longrightarrow0$$ (note that $M\otimes_{R[[X]]}R\cong M/XM$). Therefore, by the hypothesis
 condition of induction, $\gpd_{R[[X]]}(G)\leq \gpd_R(G/XG)\leq
 n-1$. Thus, $\gpd_{R[[X]]}(M)\leq \gpd_{R[[X]]}(G)+1\leq n$, as
 desired.
 \end{proof}

\begin{definition}[\cite{Stenstrom} and \cite{Garkusha}] Let $R$ be a
ring and let $M$ be an $R$-module.
\begin{enumerate}
    \item We say that $M$ has $FP$-injective dimension at most n (for some $n\geq 0$), denoted by $\FPid_R(M)\leq n$, if $\Ext^{n+1}_R(P,M) = 0$ for every finitely
     presented $R$-module $P$.
    \item A ring $R$ is said to be $n-FC$, if it is coherent and it has
    self-$FP$-injective at most at $n$ (i.e., $\FPid_R(R)\leq n$).
\end{enumerate}
\end{definition}

A ring is called $FC$ ring if it is 0-FC.

Using \cite[Theorems 6 and 7]{Chen}, we deduce the following Lemma.

\begin{lemma}\label{n-FC}
Let $R$ be a coherent ring and let $n\ge 0$ be an integer. The
following are equivalent:
\begin{enumerate}
    \item $R$ is $n-FC$;
    \item $\gwd(R)\leq n$;
    \item $\gpd_R(M)\leq n$ for every finitely presented $R$-module
    $M$.
\end{enumerate}
\end{lemma}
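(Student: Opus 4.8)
The plan is to reduce everything to known characterizations of $n$-FC rings and of Gorenstein flat/projective dimensions over coherent rings, using the two theorems of Chen cited just before the statement. First I would recall that for a coherent ring $R$, a finitely presented module $M$ has finite Gorenstein projective dimension if and only if it has finite Gorenstein flat dimension, and in that case the two dimensions agree; moreover, over a coherent ring, $\gwd(R)$ can be computed by testing only finitely presented modules, because every module is a direct limit of finitely presented ones and Gorenstein flat dimension behaves well under direct limits over coherent rings. This already gives the equivalence $(2)\Leftrightarrow(3)$ once we know the finitely presented modules control $\gwd(R)$: indeed $\gwd(R)=\sup\{\gfd_R(M)\mid M \text{ f.p.}\}$ and for f.p. modules $\gfd_R(M)=\gpd_R(M)$, so $\gwd(R)\le n$ iff $\gpd_R(M)\le n$ for all f.p. $M$.

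For the implication $(1)\Rightarrow(2)$, I would invoke \cite[Theorem 6]{Chen} (or its companion), which characterizes $n$-FC rings as precisely the coherent rings of Gorenstein weak dimension at most $n$; this is essentially the statement that $\FPid_R(R)\le n$ together with coherence forces every flat module to have finite $\FPid$ and hence every module to have Gorenstein flat dimension bounded by $n$. Conversely, $(2)\Rightarrow(1)$ uses \cite[Theorem 7]{Chen}: if $\gwd(R)\le n$ then in particular $\gfd_R(R)\le n$, and combined with coherence this yields $\FPid_R(R)\le n$, i.e. $R$ is $n$-FC. The point is that Chen's two theorems are exactly the bridge between the ``FP-injective'' language of Definition 2.8 and the Gorenstein-flat language of $\gwd$, so the lemma is really just a repackaging of those results in the coherent setting.

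The main obstacle, such as it is, will be making sure the dictionary between Chen's hypotheses and ours lines up: Chen's results are typically stated for left modules over (left) coherent rings, and one must check that the commutative hypothesis of the present paper makes the left/right distinction vacuous, and that ``$\FPid_R(R)\le n$'' in Definition 2.8 matches the self-FP-injectivity condition Chen uses. I would also want to confirm that the equality $\gfd_R(M)=\gpd_R(M)$ for finitely presented $M$ over a coherent ring is available in the form needed (this is standard — e.g. it follows from Holm's work together with the fact that over a coherent ring finitely presented Gorenstein flat modules are Gorenstein projective). Once these compatibility checks are in place, the three implications $(1)\Rightarrow(2)\Rightarrow(3)\Rightarrow(1)$ close up formally, with $(3)\Rightarrow(1)$ going through $(3)\Rightarrow(2)$ via the direct-limit argument and then $(2)\Rightarrow(1)$ as above.
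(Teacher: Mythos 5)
Your proposal matches the paper's treatment: the paper gives no argument of its own for this lemma beyond the sentence ``Using \cite[Theorems 6 and 7]{Chen}, we deduce the following Lemma,'' so deducing the equivalences from Chen--Ding's characterization of $n$-FC rings is exactly the intended route, and your additional remarks (commutativity making the left/right issue vacuous, $\gfd_R(M)=\gpd_R(M)$ for finitely presented $M$ over a coherent ring, reduction of $\gwd$ to finitely presented modules) are just the connective tissue the paper leaves implicit. No essential difference in approach.
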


\begin{remark}\label{rem}
\begin{enumerate}
\item By Lemma \ref{n-FC}, the Gorenstein weak dimension of a coherent ring $R$
is also determined by the formula:
$$\gwd(R)=\; sup\{\; \gpd_R(M)|\; M \mbox{is a finitely presented $R$-module}\}.$$

\item In Lemma \ref{n-FC}, the case $n=0$ (i.e., if $R$ is $FC$) does not
need the coherence condition (see \cite[Theorem 6]{Chen}).
\end{enumerate}
\end{remark}

\begin{lemma}\label{gwd(R)>gwd(R/xR)}
Let $R$ be a coherent ring and let $X$ be an indeterminate over $R$.
Then, $\gwd(R[[X]])\geq \gwd(R) + 1$.
\end{lemma}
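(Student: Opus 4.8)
The inequality $\gwd(R[[X]])\geq\gwd(R)+1$ should be proved by taking a finitely presented $R$-module $M$ that nearly attains $\gwd(R)$ and producing from it a finitely presented $R[[X]]$-module whose Gorenstein projective dimension over $R[[X]]$ is at least $\gpd_R(M)+1$.  By Remark \ref{rem}(1), and since $R[[X]]$ is coherent, both Gorenstein weak dimensions are computed on finitely presented modules, so it suffices to exhibit, for each finitely presented $R$-module $M$ with $\gpd_R(M)=n<\infty$, a finitely presented $R[[X]]$-module $N$ with $\gpd_{R[[X]]}(N)\geq n+1$; the natural candidate is $N=M$ regarded as an $R[[X]]$-module via $R[[X]]\to R[[X]]/XR[[X]]\cong R$ (equivalently $N=M\otimes_R R[[X]]/X R[[X]]$).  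If $\gwd(R)=\infty$ one argues the same way with $n$ arbitrarily large, so I will assume $\gwd(R)=n$ is finite.

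First I would record that $M$, viewed as an $R[[X]]$-module, is finitely presented over $R[[X]]$: it has a finite presentation over $R$, which base-changes to a finite presentation of $R\otimes_R M$ over $R[[X]]$, and the extra relation $X$ is finitely generated.  Next, the key homological input: $X$ is a nonzero divisor on $R[[X]]$ and kills $M$, so $M$ has a natural two-term structure over $R[[X]]$.  Concretely, if $P_\bullet\to M$ is a projective resolution over $R[[X]]$ of length controlled by $\gpd_{R[[X]]}(M)$, then reducing mod $X$ and using that $\Tor^{R[[X]]}_{>0}(M,R)$ is computed by the Koszul-type resolution $0\to R[[X]]\xrightarrow{X}R[[X]]\to R\to 0$ (this is exactly the mechanism of Lemma \ref{lemma x is nonzero} and its use in Lemma \ref{thm coherent M/XM}), one gets a long exact sequence relating $\Tor^{R[[X]]}_i(M,-)$ to $\Tor^R_i(M,-)$ with a shift.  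Equivalently, there is a change-of-rings spectral sequence $\Tor^R_p(\Tor^{R[[X]]}_q(M,R),-)\Rightarrow\Tor^{R[[X]]}_{p+q}(M,-)$, and since $\fd_{R[[X]]}R=1$ this degenerates to a long exact sequence; its upshot is that $M$ has infinite flat dimension — hence infinite Gorenstein flat, hence infinite Gorenstein projective dimension — over $R[[X]]$ \emph{unless} we twist correctly, so instead I would phrase the argument through a genuine finitely presented witness.

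The cleanest route: suppose for contradiction that every finitely presented $R[[X]]$-module has Gorenstein projective dimension $\leq n$, i.e.\ (by Lemma \ref{n-FC}) that $R[[X]]$ is $n$-$FC$, equivalently $\FPid_{R[[X]]}(R[[X]])\leq n$.  I claim this forces $\FPid_R(R)\leq n-1$, which by Lemma \ref{n-FC} gives $\gwd(R)\leq n-1$, contradicting $\gwd(R)=n$ (the case $n=0$ being handled separately, where $R[[X]]$ $FC$ would make $R[[X]]$ self-$FP$-injective, forcing $X$ to be a zero divisor on $R[[X]]$ — absurd).  To prove the claim, let $P$ be a finitely presented $R$-module; then $\Ext^{i}_R(P,R)\cong\Ext^{i+1}_{R[[X]]}(P,R[[X]])$ for all $i\geq 0$.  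This isomorphism comes from applying $\Hom_{R[[X]]}(-,R[[X]])$ to the short exact sequence $0\to R[[X]]\xrightarrow{X}R[[X]]\to R\to 0$ after resolving $P$ (viewed over $R[[X]]$), together with the base-change identity $\Ext^{i}_{R[[X]]}(P,R)\cong\Ext^{i}_R(P,R)$ valid since $P$ is extended and $X$ is a nonzero divisor on $R[[X]]$; one must check $P$ is finitely presented over $R[[X]]$ (done above) so that $\FPid$ applies.  Taking $i=n$: $\Ext^{n+1}_{R[[X]]}(P,R[[X]])=0$ for all finitely presented $P$ gives $\Ext^{n}_R(P,R)=0$ for all finitely presented $P$, i.e.\ $\FPid_R(R)\leq n-1$, as claimed.

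**Main obstacle.**  The delicate point is the $\Ext$-shift isomorphism $\Ext^{i}_R(P,R)\cong\Ext^{i+1}_{R[[X]]}(P,R[[X]])$ and making sure all the modules in play ($P$ over $R[[X]]$, the kernels appearing in resolutions) stay finitely presented so that the $FC$/$FP$-injective machinery of Lemma \ref{n-FC} actually applies — this uses coherence of $R[[X]]$ (hence of $R$, via \cite[Theorem 4.1.1(1)]{Glaz}) and the stability of finite presentation under taking syzygies \cite[Theorem 2.5.1]{Glaz}.  One also needs to treat $n=0$ by hand and to spell out the $\gwd(R)=\infty$ case (run the contradiction with $n$ replaced by any integer $k$, concluding $R[[X]]$ is not $k$-$FC$ for any $k$, whence $\gwd(R[[X]])=\infty\geq\gwd(R)+1$).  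Everything else — finite presentation of $M$ over $R[[X]]$, base change of $\Tor$ and $\Ext$ along $R[[X]]\to R$, the fact that $\fd_{R[[X]]}R=1$ — is routine once the framework is set up.
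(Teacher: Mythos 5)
Your proof is, modulo being phrased as a contradiction, the paper's own argument: both reduce to finitely presented modules via Lemma \ref{n-FC} (the $n$-FC characterization of $\gwd$), transfer finite presentation along $R[[X]]\to R[[X]]/XR[[X]]\cong R$, and deduce from the shift isomorphism $\Ext^{n}_{R}(M,R)\cong \Ext^{n+1}_{R[[X]]}(M,R[[X]])$ that $R$ is $(n-1)$-FC, with the degenerate case handled separately (the paper notes $\gpd_{R[[X]]}(R)=\pd_{R[[X]]}(R)=1$ to force $\gwd(R[[X]])\geq 1$; you rule out $n=0$ via self-FP-injectivity, which is fine). The only repair needed is your justification of that shift: the intermediate ``base-change identity'' $\Ext^{i}_{R[[X]]}(P,R)\cong\Ext^{i}_{R}(P,R)$ is false for $i\geq 1$ (already $\Ext^{1}_{R[[X]]}(R,R)\cong R\neq 0$), so you should not derive the shift from it but simply invoke Rees's theorem, which is exactly what the paper cites as \cite[Theorem 1.3.5]{Glaz} and which applies because $X$ is a nonzero divisor on $R[[X]]$ annihilating $P$.
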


\begin{proof} By \cite[Theorem 4.1.1(1)]{Glaz},
$R\cong R[[X]]/XR[[X]]$ is coherent since it  is a finitely
presented $R$-module (from the short exact sequence\\ $0\longrightarrow R[[X]]\stackrel{X}\longrightarrow R[[X]] \longrightarrow R \longrightarrow 0$).\\
We may assume that $\gwd(R[[X]])=n<\infty$. Using \cite[Theorem
1.3.3]{Glaz} and \cite[Proposition 2.27]{Holm}, we have
$\gpd_{R[[X]]}(R)=\pd_{R[[X]]}(R)=1$. Thus, by Lemma \ref{n-FC},
$\gwd(R[[X]])=n\geq 1$ since $R$ is a finitely
presented $R[[X]]$-module.\\
 Now, let $M$ be a finitely presented $R$-module. Then, by
 \cite[Theorem 2.1.8]{Glaz}, $M$ is a finitely presented
 $R[[X]]$-module (since $R\cong R[[X]]/XR[[X]]$). Thus, by \cite[Theorem 1.3.5]{Glaz} and Lemma \ref{n-FC},
 $\Ext^n_{R}(M,R)=\Ext^{n+1}_{R[[X]]}(M,R[[[X]])=0$. Therefore, $R$ is $(n-1)-FC$. Hence, by Lemma \ref{n-FC},  $\gwd(R)\leq n-1$. Therefore $\gwd(R)\leq \gwd(R[[X]])-1$, as
 desired.
 \end{proof}

Now we are ready to present our main result of this paper.

\begin{theorem}\label{main result}
Let $R$ be a ring and let $x$ be an indeterminate over $R$. If
$R[[x]]$ is  a coherent ring, then $\gwd(R[[x]])=\gwd(R)+1$.

\end{theorem}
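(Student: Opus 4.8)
The plan is to prove the two inequalities $\gwd(R[[x]]) \geq \gwd(R)+1$ and $\gwd(R[[x]]) \leq \gwd(R)+1$ separately. The first inequality is already established as Lemma~\ref{gwd(R)>gwd(R/xR)}, so the real work is the upper bound. Throughout, I will use Remark~\ref{rem}(1): since $R[[x]]$ is coherent, $\gwd(R[[x]])$ is computed by the supremum of $\gpd_{R[[x]]}(M)$ over finitely presented $R[[x]]$-modules $M$; and $R$ is coherent by \cite[Theorem 4.1.1(1)]{Glaz}, so the analogous formula holds over $R$. We may assume $\gwd(R)=d<\infty$, since otherwise both sides are infinite (by Lemma~\ref{gwd(R)>gwd(R/xR)} the left side is at least $\gwd(R)+1$, forcing it to be infinite too).

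First I would reduce to the case where the finitely presented $R[[x]]$-module $M$ has $x$ as a nonzero divisor on it. Given an arbitrary finitely presented $M$, consider the submodule $N = \{m \in M \mid x^k m = 0 \text{ for some } k\}$ of $x$-torsion elements, or more practically, work with a finite free resolution and the short exact sequence $0 \to K \to R[[x]]^t \to M \to 0$: the point is that $x$ is automatically a nonzero divisor on $R[[x]]^t$ and on any syzygy $K$ (which is finitely presented since $R[[x]]$ is coherent, by \cite[Theorem 2.5.1]{Glaz}), because $\Tor_{R[[x]]}(R[[x]]^t, R)=0$ and $\fd_{R[[x]]}R \leq 1$ force $\Tor_{R[[x]]}(K,R)=0$, whence $x$ is a nonzero divisor on $K$ by Lemma~\ref{lemma x is nonzero}. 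So it suffices to bound $\gpd_{R[[x]]}(K)$, because $\gpd_{R[[x]]}(M) \leq \gpd_{R[[x]]}(K)+1$. For such a module $K$ with $x$ a nonzero divisor, Lemma~\ref{thm coherent M/XM} gives $\gpd_{R[[x]]}(K) \leq \gpd_R(K/xK)$. Now $K/xK$ is a finitely presented $R$-module (tensoring the presentation of $K$ with $R$), so $\gpd_R(K/xK) \leq \gwd(R) = d$ by Remark~\ref{rem}(1). Hence $\gpd_{R[[x]]}(M) \leq d+1$ for every finitely presented $M$, giving $\gwd(R[[x]]) \leq d+1$.

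Combining with Lemma~\ref{gwd(R)>gwd(R/xR)} yields $\gwd(R[[x]]) = \gwd(R)+1$, which is the claim.

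The main obstacle is the torsion reduction in the second paragraph: Lemma~\ref{thm coherent M/XM} only applies to modules on which $x$ is a nonzero divisor, so one must be careful that passing from $M$ to its first syzygy $K$ both preserves finite presentation (needs coherence, hence \cite[Theorem 2.5.1]{Glaz}) and produces a module with $x$ a nonzero divisor (needs $\fd_{R[[x]]}R \leq 1$ together with Lemma~\ref{lemma x is nonzero}), and that the bookkeeping $\gpd_{R[[x]]}(M) \leq \gpd_{R[[x]]}(K)+1$ does not lose the sharpness needed to match the lower bound. A secondary point worth checking is the degenerate case $d=0$: if $R$ is $FC$ then one still needs $\gwd(R[[x]]) = 1$, and the argument above indeed gives $\leq 1$ while Lemma~\ref{gwd(R)>gwd(R/xR)} gives $\geq 1$.
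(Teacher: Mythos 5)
Your proposal is correct and follows essentially the same route as the paper's proof: pass to a first syzygy $K$ of a finitely presented $R[[x]]$-module, use $\fd_{R[[x]]}(R)\leq 1$ and Lemma~\ref{lemma x is nonzero} to see that $x$ is a nonzero divisor on $K$, then apply Lemma~\ref{thm coherent M/XM} and Lemma~\ref{n-FC} to get $\gpd_{R[[x]]}(M)\leq \gwd(R)+1$, and combine with Lemma~\ref{gwd(R)>gwd(R/xR)}. No substantive differences from the paper's argument.
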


\begin{proof} If $\gwd(R)=\infty$, then by Lemma
\ref{gwd(R)>gwd(R/xR)}, we have the desired equality. Otherwise we
put $\gwd(R)=n$. By Lemma \ref{gwd(R)>gwd(R/xR)}, it is enough to
show that $\gwd(R[[X]])\leq \gwd(R)+1$. Let $M$ be a finitely
presented
 $R[[X]]$-module and consider a short exact sequence of
 $R[[X]]$-modules $$(*)\,\,\,\,0\longrightarrow K \longrightarrow P
 \longrightarrow M\longrightarrow 0,$$ where $P$ is a finitely
 generated projective $R[[X]]$-module. Then, by \cite[Theorem 2.5.1]{Glaz}, $K$ is also finitely presented
 since $R[[X]]$ is coherent. Thus $K/XK$ is also finitely presented $R$-module (by \cite[Theorem 2.1.8]{Glaz}). On the other hand, from the short sequence $(*)$ we
 have $\Tor_{R[[X]]}(K,R)=\Tor_{R[[X]]}^2(M,R)=0$ since $\fd_{R[[X]]}(R)\leq
 1$. So, from Lemma \ref{lemma x is nonzero}, $X$ is a nonzero
 divisor on $K$. Then, by Lemma \ref{thm coherent M/XM} and Lemma \ref{n-FC},
 $\gpd_{R[[X]]}(K)\leq \gpd_R(K/XK)\leq n$. Then, $\gpd_{R[[X]]}(M)\leq
 n+1$. Consequently, by Lemma \ref{n-FC}, $\gwd(R[[X]])\leq n+1
 =\gwd(R)+1$.
\end{proof}

Recall that a ring is called quasi-Frobenius, if it is Noetherian
and  self-injective (see \cite{Nicholson}).

\begin{proposition}\label{QF}
Let $R$ be a quasi-Frobenius ring. Then,
$$\gwd(R[[X]])=\ggldim(R[[X]])=1$$
\end{proposition}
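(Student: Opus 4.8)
The plan is to deduce this from Theorem \ref{main result} together with a few standard facts about quasi-Frobenius rings. First, recall that a quasi-Frobenius ring $R$ is Noetherian, hence coherent, and by \cite[Theorem 4.1.1(1)]{Glaz} the power series ring $R[[X]]$ is coherent as well; this is what lets us apply Theorem \ref{main result} to obtain $\gwd(R[[X]])=\gwd(R)+1$. So it suffices to show $\gwd(R)=0$, i.e. that every $R$-module is Gorenstein flat. Over a quasi-Frobenius ring this is classical: $R$ is Noetherian and self-injective, so $R$ is an Iwanaga--Gorenstein ring of dimension $0$, and over such a ring every module is Gorenstein projective (equivalently Gorenstein flat), so $\gwd(R)=\ggldim(R)=0$. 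Plugging this into Theorem \ref{main result} gives $\gwd(R[[X]])=1$.

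For the equality $\ggldim(R[[X]])=1$, I would first note $\ggldim(R[[X]])\geq\gwd(R[[X]])=1$ by \cite[Theorems 2.1 and 2.11]{Bennis and Mahdou2}, and also $\ggldim(R[[X]])\geq 1$ because $R[[X]]$ is not Gorenstein-semisimple (for instance $\gpd_{R[[X]]}(R)=\pd_{R[[X]]}(R)=1$ as used in Lemma \ref{gwd(R)>gwd(R/xR)}, so not every module has $\gpd$ zero). For the reverse inequality $\ggldim(R[[X]])\leq 1$, the cleanest route is to invoke \cite[Proposition 2.12]{Bennis and Mahdou2}: since $\wdim(R[[X]])=\wdim(R)+1=0+1=1$ is finite (here $\wdim(R)=0$ because a quasi-Frobenius ring is self-injective and Noetherian, hence von Neumann regular --- wait, that is false in general; instead use that $R$ being quasi-Frobenius forces $\gwd(R)=0$ and the classical analogue), the finiteness of the classical weak or global dimension is what forces equality with the Gorenstein version. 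Concretely, $\gldim(R[[X]])$ need not be finite, so one should argue via $\wdim$. But $\wdim(R)$ can be infinite for quasi-Frobenius $R$ (e.g. $R=k[x]/(x^2)$), so this route does not directly work either.

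The main obstacle is therefore establishing $\ggldim(R[[X]])\leq 1$ when the classical dimensions are infinite. The right tool is \cite[Theorem 2.11]{Bennis and Mahdou2}, which gives $\ggldim(R[[X]])=\GPD(R[[X]])$, combined with a characterization of $\GPD$ over coherent rings analogous to Lemma \ref{n-FC}: for a coherent ring $S$ one should have $\ggldim(S)=\sup\{\gpd_S(M)\mid M\text{ finitely presented}\}$ when this sup is finite, or more precisely one can bound $\ggldim(R[[X]])$ by controlling $\gid_{R[[X]]}$. The plan is to show $\gid_{R[[X]]}(M)\leq 1$ for every $R[[X]]$-module $M$: since $R$ is quasi-Frobenius, $R$ is $0$-Gorenstein, and $R[[X]]$ is then $1$-Iwanaga--Gorenstein (Noetherian of self-injective dimension $1$ on both sides, because $\id_{R[[X]]}(R[[X]])=\id_R(R)+1=1$ by the analogue of the Jøndrup--Small argument for injective dimension, or by a direct computation using the exact sequence $0\to R[[X]]\xrightarrow{X}R[[X]]\to R\to 0$ and $\id_{R[[X]]}(R)=1$). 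Over a $1$-Iwanaga--Gorenstein ring every module has Gorenstein injective dimension at most $1$ and Gorenstein projective dimension at most $1$, so $\ggldim(R[[X]])\leq 1$; combined with the lower bound this yields $\ggldim(R[[X]])=1$, completing the proof.

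I expect the key step requiring the most care to be the verification that $R[[X]]$ is $1$-Iwanaga--Gorenstein --- specifically the Noetherian hypothesis is fine (Hilbert basis theorem for power series rings), and the self-injective dimension computation $\id_{R[[X]]}(R[[X]])=1$ should follow from $\id_R(R)=0$ via the change-of-rings spectral sequence or the exact sequence above, but one must be careful that these classical injective-dimension change-of-rings results apply in the coherent (here Noetherian) setting, which they do.
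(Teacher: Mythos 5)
Your proof is correct, and its first half is exactly the paper's: reduce to $\gwd(R)=0$ for a quasi-Frobenius ring and feed this into Theorem \ref{main result}, the hypothesis of which holds because $R[[X]]$ is Noetherian (Hilbert basis theorem for power series), hence coherent. The difference lies in how $\ggldim(R[[X]])=1$ is obtained: the paper simply notes that $R[[X]]$ is Noetherian and invokes \cite[Theorem 2.9]{Bennis and Mahdou2}, which in the Noetherian case identifies the Gorenstein global dimension with the Gorenstein weak dimension, so that $\ggldim(R[[X]])=\gwd(R[[X]])=1$ with no further work; you instead reprove the relevant special case by showing that $R[[X]]$ is $1$-Iwanaga--Gorenstein and quoting the Iwanaga/Enochs--Jenda theorem that over such a ring every module has Gorenstein projective (and injective) dimension at most $1$. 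This is the same underlying mathematics — the Noetherian case of the Bennis--Mahdou result rests on precisely that characterization — so your route is a legitimate, more self-contained alternative, at the cost of having to verify $\id_{R[[X]]}(R[[X]])=1$ by hand. Two points deserve care: first, the retracted detours in your middle paragraph (the von Neumann regular claim and the attempt via \cite[Proposition 2.12]{Bennis and Mahdou2}) should simply be deleted, since, as you note, $\wdim(R)$ is typically infinite here; second, the proposed ``direct computation'' of $\id_{R[[X]]}(R[[X]])=1$ from the sequence $0\to R[[X]]\stackrel{X}{\longrightarrow} R[[X]]\to R\to 0$ together with $\id_{R[[X]]}(R)=1$ is not by itself a proof, because that sequence only yields $\id_{R[[X]]}(R)\leq \id_{R[[X]]}(R[[X]])+1$, i.e.\ a bound in the wrong direction. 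The clean justification is the standard deformation argument: $R[[X]]$ is Noetherian, $X$ is a nonzerodivisor lying in the Jacobson radical, and $R[[X]]/XR[[X]]\cong R$ is Artinian Gorenstein (quasi-Frobenius), so $R[[X]]$ is Gorenstein of Krull dimension one, whence $\id_{R[[X]]}(R[[X]])=1$; with that step repaired, your argument goes through and yields the stated equalities.
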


\begin{proof} Let $R$ be a quasi-Frobenius ring. Then, from
\cite[Proposition 2.8 and Theorem 2.9]{Bennis and Mahdou2},
$\gwd(R)=\ggldim(R)=0$. Thus, from Theorem \ref{main result},
$\gwd(R[[X]])=1$. On the other hand, $R$ is Noetherian and so
$R[[X]]$ is also Noetherian. Therefore, by \cite[Theorem 2.9]{Bennis
and Mahdou2}, $\ggldim(R[[X]])=1$.
\end{proof}

Recall that a ring $R$ is called an arithmetical ring if every
finitely generated ideal is locally principal. If $\wdim(R) \leq 1$,
then $R$ is an arithmetical ring (see for instance \cite{BG}). So we
lead to ask the following question: If $\gwd(R) \leq 1$, then is $R$
arithmetical ring?

The following result shows that the above question is false in
general.

\begin{theorem}\label{Arithmetqiue}
Let $(R,\fm)$ be a local quasi-Frobenius ring which is not a field.
Then the following statements hold:
\begin{itemize}
    \item[(1)] $\gwd(R[[X]])= 1$.
    \item[(2)] $R[[X]]$ is not an arithmetical ring.
\end{itemize}
\end{theorem}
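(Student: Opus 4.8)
The plan is to prove the two assertions separately, with part (1) being an immediate application of what has already been built up. For part (1), I note that a local quasi-Frobenius ring $R$ is in particular Noetherian and self-injective, hence a quasi-Frobenius ring in the sense of the paper, so Proposition \ref{QF} applies directly and gives $\gwd(R[[X]]) = \ggldim(R[[X]]) = 1$. Strictly speaking one should check that $R[[X]]$ is coherent, but this is automatic since $R$ Noetherian implies $R[[X]]$ Noetherian, hence coherent; then the hypotheses of Theorem \ref{main result} and of Proposition \ref{QF} are satisfied. So part (1) requires essentially no new work.

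For part (2), I would argue by contradiction: suppose $R[[X]]$ is arithmetical. The key structural fact is that $(R,\fm)$ being local forces $R[[X]]$ to be local with maximal ideal $\fn = \fm R[[X]] + X R[[X]]$ (the units in $R[[X]]$ are precisely the series with unit constant term). A local arithmetical ring is a chain ring (valuation ring), i.e.\ its ideals are totally ordered by inclusion; in particular every finitely generated ideal is principal. So I would derive a contradiction by exhibiting a finitely generated ideal of $R[[X]]$ that is not principal. The natural candidate is $I = (a, X)$ where $a \in \fm$ is a nonzero element with $a^2 = 0$ or more generally $a \in \Ann_R(\fm) \setminus \{0\}$ — such an $a$ exists because $R$ is quasi-Frobenius and not a field, so $\fm \neq 0$ and the socle $\Ann_R(\fm)$ is nonzero (indeed, for a quasi-Frobenius local ring the socle is a nonzero simple submodule).

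The heart of the argument is to show $(a,X)$ is not principal in $R[[X]]$. If $(a,X) = (f)$ for some $f = c_0 + c_1 X + \cdots \in R[[X]]$, then from $X \in (f)$ and $a \in (f)$ one analyzes the constant terms: reducing mod $X$, the ideal $(a,X)/(X) = aR$ inside $R \cong R[[X]]/XR[[X]]$ must equal $(\bar f) = c_0 R$, so $c_0 R = aR$; in particular $c_0 \in \fm$ (as $a \in \fm$) and $c_0$ annihilates $\fm$. But then $X = f g$ for some $g = d_0 + d_1 X + \cdots$, and comparing coefficients gives $c_0 d_0 = 0$, $c_0 d_1 + c_1 d_0 = 1$; since $c_0 \in \fm$ we need $c_1 d_0 \equiv 1 \pmod{\fm}$, forcing $d_0$ to be a unit; but $c_0 d_0 = 0$ with $d_0$ a unit gives $c_0 = 0$, contradicting $c_0 R = aR \neq 0$. (One must handle the case $a \notin \fm^2$ versus $a \in \fm^2$ uniformly; working with $a$ in the socle makes the coefficient bookkeeping clean because $\fm a = 0$.)

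The main obstacle I anticipate is the coefficient chase showing $(a,X)$ is not principal — it is elementary but needs care, and one should make sure the chosen element $a$ genuinely exists and has the right annihilator properties. Invoking the socle of a quasi-Frobenius local ring (nonzero, and contained in $\fm$ since $R$ is not a field) is the cleanest way to produce $a$ with $\fm a = 0$ and $a \neq 0$; alternatively, one may simply take any $0 \neq a \in \fm$ with $aR$ minimal among nonzero principal ideals, which again lies in the socle. Once $a$ is fixed, the contradiction is purely formal manipulation of power series coefficients modulo $\fm$. Finally, since a chain ring has all finitely generated ideals principal, the failure of $(a,X)$ to be principal shows $R[[X]]$ is not a chain ring, hence not arithmetical (using that arithmetical plus local implies chain ring), completing part (2).
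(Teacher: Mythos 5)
Your proposal is correct and follows essentially the same route as the paper: part (1) is the direct application of Proposition \ref{QF} (with the observation that $R[[X]]$ is Noetherian, hence coherent), and part (2) argues by contradiction that in the local ring $R[[X]]$ the ideal $(a,X)$ cannot be principal, via a comparison of the constant and degree-one coefficients of power series, exactly as the paper does. The only (harmless) difference is your choice of $a$ in the socle $\Ann_R(\fm)$: that property is never actually used in your coefficient chase, and indeed the paper's argument, like yours, works for an arbitrary nonzero non-unit $a\in\fm$.
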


\begin{proof} (1) We have $\gwd(R[[X]])= 1$ by Proposition \ref{QF}.

{\bf 2)} We claim that $R[[X]]$ is not an arithmetical ring. Deny. Let $a$
be a non-zero non-invertible element of $R$ and let $I :=aR[[X]] +
XR[[X]]$. Then $I =PR[[X]]$ for some $P :=\sum _{i}a_i X^i\in
R[[X]]$ (where $a_i \in R$), since $R[[X]]$ is a local arithmetical
ring.

Since $P \in I =aR[[X]] + XR[[X]]$, we have $P =aQ_{1} + XQ_{2}$ for
some $Q_{1} (:=\sum _{i}c_i X^i$), $Q_{2} \in R[[X]]$ . Hence,
$a_{0} =ac_{0}$.

On the other hand, we have $a =PQ$ for some $Q =\displaystyle\sum
_{i}b_i X^i \in R[[X]]$ (where $b_{i} \in R$) since $a \in I
=PR[[X]]$. Hence, $a =a_{0}b_{0}$. We claim that $b_{0} \in M$. If
this is not the case, then $b_0$ is invertible in $R$ and so $Q$ is
invertible in $R[[X]]$; hence, we may assume that $P =a$ (since
$aR[[X]] =PQR[[X]] =PR[[X]] =I$). But, $X \in I =aR[[X]]$ implies
that $X =a\sum _{i}d_i X^i$ for some $d_{i} \in R$. Hence, $1
=ad_{1}$ and so $a$ is invertible in $R$, a contradiction.
Therefore, $b_{0} \in M$.

Therefore, $a =ab_{0}c_{0}$ since $a_{0} =ac_{0}$ and $a
=a_{0}b_{0}$  and so $a(1 - b_{0}c_{0}) =0$. But $1 - b_{0}c_{0}$ is
invertible in $R$ since $b_{0}c_{0} \in M$ (since $b_{0} \in M$);
hence $a =0$, a contradiction. Hence, $R[[X]]$ is not an
arithmetical ring, as desired.
\end{proof}

In the rest of this paper, We compare the small finitistic
Gorenstein projective dimension of the base ring R,
$$\fGPD(R)=\sup \left\{%
\begin{array}{ll}
    &| \quad M \; is \; an \; R-module\; with\  \\
    \gpd_R(M) & |\quad  finite\; Gorenstein \; projective \; dimension\\
     & |\quad and\; M\; admits \; a\; finite \; projective \; resolution \\
\end{array}
\right\}.$$ with the usual small finitistic projective dimension,
$\fPD(R)$ (see \cite{Glaz}).

It is clear that if $R$ is coherent we have
 $$\fGPD(R)= sup\{\gpd_RM \, | \, M \mbox{is a finitely presented and}\,\,
 \gpd_RM<\infty\}\leq \gwd(R),$$
 with equality  if $\gwd(R)$ is finite (by Lemma \ref{n-FC}).

In the proof of the next Theorem, we use the proofs of
\cite[Theorems 2.10 and  2.28]{Holm}.

\begin{theorem}\label{fGPD}
For any coherent ring $R$ there is an equality $\fGPD(R)=\fPD(R)$.
\end{theorem}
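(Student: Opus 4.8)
\textit{Proof proposal.}
The plan is to prove the two inequalities $\fPD(R)\le\fGPD(R)$ and $\fGPD(R)\le\fPD(R)$ separately, using throughout the characterizations available over the coherent ring $R$. By the remark preceding the statement, $\fGPD(R)=\sup\{\gpd_R(M)\mid M\text{ finitely presented},\ \gpd_R(M)<\infty\}$, and likewise $\fPD(R)$ is the supremum of $\pd_R(M)$ over finitely presented $M$ of finite projective dimension (\cite{Glaz}); coherence also guarantees, via \cite[Theorem 2.5.1]{Glaz}, that every syzygy of a finitely presented module is finitely presented. The inequality $\fPD(R)\le\fGPD(R)$ is immediate: if $M$ is finitely presented with $\pd_R(M)<\infty$, then $\gpd_R(M)=\pd_R(M)$ by \cite[Proposition 2.27]{Holm}, so $M$ also contributes to the supremum defining $\fGPD(R)$, with the same value.

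For the reverse inequality I would fix a finitely presented module $M$ with $n:=\gpd_R(M)<\infty$ and try to build a \emph{finitely presented} module $H$ with $\pd_R(H)=n$; since $H$ then admits a finite resolution by finitely generated projectives, this gives $n\le\fPD(R)$, and the supremum over all such $M$ yields $\fGPD(R)\le\fPD(R)$. Choosing a resolution $0\lo K\lo P_{n-1}\lo\cdots\lo P_0\lo M\lo 0$ by finitely generated projectives, the $n$-th syzygy $K$ is finitely presented, and since $\gpd_R(M)\le n$ it is Gorenstein projective by \cite[Theorem 2.10]{Holm}. Following the pushout construction in the proof of \cite[Theorem 2.28]{Holm}, I would splice a co-syzygy $0\lo K\lo Q\lo L\lo 0$ of $K$, with $Q$ projective and $L$ Gorenstein projective, onto this resolution to obtain a short exact sequence $0\lo M\lo H\lo G\lo 0$ with $G$ Gorenstein projective and $\pd_R(H)\le n$. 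Applying $\Hom_R(-,R)$ and using $\Ext^i_R(G,R)=0$ for $i\ge 1$ gives $\Ext^n_R(H,R)\cong\Ext^n_R(M,R)$, and the latter is nonzero because, for a finitely presented module of finite Gorenstein projective dimension over a coherent ring, $n=\gpd_R(M)=\sup\{i\mid\Ext^i_R(M,R)\neq 0\}$ (by \cite[Theorem 2.10]{Holm}, since $\Ext$ against an arbitrary projective reduces to $\Ext$ against $R$ for a module with a finitely generated projective resolution); hence $\pd_R(H)=n$.

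The hard part is to keep every module in this construction finitely presented, and in particular to realize the co-syzygy $0\lo K\lo Q\lo L\lo 0$ with $Q$ \emph{finitely generated} projective and $L$ finitely presented and Gorenstein projective; this is the one place where coherence is essential. Here I would invoke that over a coherent ring a finitely presented Gorenstein projective module is totally reflexive, i.e.\ has Gorenstein dimension zero in the sense of \cite{Aus bri}: thus $K$ is reflexive and $\Ext^i_R(K,R)=\Ext^i_R(K^{*},R)=0$ for all $i\ge 1$. Dualizing a finite free presentation $P_1\lo P_0\lo K^{*}\lo 0$ of $K^{*}$ and using $K^{**}\cong K$ then yields $0\lo K\lo P_0^{*}\lo L\lo 0$ with $P_0^{*}$ finitely generated projective and $L=\coker(K\lo P_0^{*})$ finitely presented and again totally reflexive, hence Gorenstein projective. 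With this finitely presented co-syzygy available, the pushouts and splicings assembling $H$ all remain inside the category of finitely presented modules, and the proof closes. I expect the genuine difficulty to lie entirely in this finiteness bookkeeping: the homological content is exactly that of \cite[Theorems 2.10 and 2.28]{Holm}, and coherence together with Auslander--Bridger duality is what confines it to finitely presented modules.
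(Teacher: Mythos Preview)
Your overall plan coincides with the paper's: both adapt the proofs of \cite[Theorems 2.10 and 2.28]{Holm} to finitely presented modules, using coherence to keep every syzygy finitely presented, and both identify the production of a \emph{finitely generated} projective co-syzygy $0\to K\to Q\to L\to 0$ of the Gorenstein projective $n$-th syzygy $K$ as the crux.

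The gap is the sentence ``over a coherent ring a finitely presented Gorenstein projective module is totally reflexive.'' This is precisely the nontrivial point, and you cannot simply invoke it: the definition of Gorenstein projective supplies a complete resolution by possibly \emph{infinitely} generated projectives, and neither reflexivity of $K$ nor the vanishing of $\Ext^i_R(K^{*},R)$ follows until one already knows that $K$ sits inside an acyclic complex of \emph{finitely generated} projectives. Establishing that is equivalent to producing the finitely generated co-syzygy you are after, so the argument is circular as written. The paper fills this hole with an elementary, coherence-free observation (Lemma~\ref{lemma resolution}): if $K$ is finitely generated and Gorenstein projective, the embedding $K\hookrightarrow P$ coming from Gorenstein projectivity factors through a finitely generated free direct summand $L_0$ of $P$, and $L_0/K$, being a summand of the Gorenstein projective module $P/K$, is again Gorenstein projective and finitely generated. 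Iterating gives the finitely generated right co-proper resolution directly, without any appeal to Auslander--Bridger duality.

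Your endgame, on the other hand, is tidier than the paper's. The paper first proves $\fGPD(R)\le\fPD(R)+1$ via a mapping-cone construction and then removes the extra $1$ by a separate maximality contradiction (if the constructed module were Gorenstein projective, a quotient would have Gorenstein projective dimension $m+1>\fGPD(R)$). Your observation that $\Ext^n_R(H,R)\cong\Ext^n_R(M,R)\neq 0$---legitimate because $M$ has a resolution by finitely generated projectives, so in \cite[Theorem 2.20]{Holm} it suffices to test against $R$---pins down $\pd_R H=n$ in a single stroke and avoids that detour.
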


Recall that a right co-proper projective resolution of an $R$-module
$M$ is an exact sequence $\mathbf{X}= 0\longrightarrow M
\longrightarrow X^0\longrightarrow X^1\longrightarrow ...$ with
$X^i$ is projective for each $i\geq 0$ such that
$\mathbf{Hom}_R(\mathbf{X},P)$ is exact for every projective module
$P$.

\begin{lemma}\label{lemma resolution}
Let $R$ be a ring. Then, every finitely generated $G$-projective
$R$-module admits a right co-proper resolution of finitely generated
free $R$-module.
\end{lemma}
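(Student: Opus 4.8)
The plan is to start from the standard definition of a finitely generated Gorenstein projective module and upgrade the data to finitely generated free modules step by step. Let $M$ be a finitely generated $G$-projective $R$-module. By definition there is a complete projective resolution $\mathbf{P}=\cdots\to P_1\to P_0\to P^0\to P^1\to\cdots$ of projectives with $M=\Im(P_0\to P^0)$ and such that $\Hom_R(\mathbf{P},Q)$ is exact for every projective $Q$. In particular the right half $0\to M\to P^0\to P^1\to\cdots$ is a co-proper projective resolution, but the $P^i$ need not be finitely generated or free. The first step is to replace the $P^i$ by finitely generated free modules. I would do this inductively: suppose we have built an exact sequence $0\to M\to F^0\to\cdots\to F^{i-1}\to C^i\to 0$ with the $F^j$ finitely generated free, $C^i$ finitely generated $G$-projective, and with $\Hom_R(-,P)$ leaving the portion constructed so far exact for every projective $P$. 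Since $C^i$ is finitely generated and $G$-projective, it is a direct summand of a strongly Gorenstein projective module by Theorem 2.4; using Proposition 2.3 applied to that $SG$-projective module one obtains a short exact sequence $0\to C^i\to Q\to C'\to 0$ with $Q$ projective, $C'$ again $G$-projective, and $\Ext(C^i,P)=0$ for all projective $P$, so the sequence stays $\Hom(-,P)$-exact. Because $C^i$ is finitely generated, I can choose a finitely generated free module $F^i$ surjecting onto $Q$ (hence onto $C'$ through $Q$), and form the pushout to get $0\to C^i\to F^i\to C^{i+1}\to 0$. The key points to check here are that $C^{i+1}$ is still finitely generated $G$-projective and that $\Hom_R(-,P)$ remains exact on the enlarged sequence; the latter follows from the horseshoe-type argument, using $\Ext(C^i,P)=0$ and the fact that $\Hom_R(F^i,P)\to\Hom_R(Q,P)$ (or the relevant map) behaves well since $F^i\to Q$ is a split-on-$\Hom$ surjection onto a projective summand.

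Iterating this construction yields an exact sequence $\mathbf{X}=0\to M\to F^0\to F^1\to\cdots$ with each $F^i$ finitely generated free and with $\Hom_R(\mathbf{X},P)$ exact for every projective module $P$, which is exactly a right co-proper resolution by finitely generated free modules, as required.

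I expect the main obstacle to be the exactness of $\Hom_R(-,P)$ at each stage, i.e. verifying that replacing a large projective $P^i$ by a finitely generated free $F^i$ does not destroy co-properness. The way around this is to route the replacement through the $SG$-projective/$\Ext$-vanishing machinery of Proposition 2.3 and Theorem 2.4: because every cokernel $C^i$ that appears is finitely generated $G$-projective, one always has $\Ext^{\ge 1}(C^i,P)=0$, so the relevant $\Hom$-complex of any short exact sequence $0\to C^i\to F^i\to C^{i+1}\to 0$ with $C^{i+1}$ $G$-projective is automatically exact; the only thing to engineer is that $F^i$ can be taken finitely generated free, which is possible precisely because $C^i$ (equivalently the projective $Q$ of Proposition 2.3, after discarding the complementary summand) is finitely generated. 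A minor secondary point, handled by the same summand bookkeeping, is ensuring that passing from "direct summand of $SG$-projective" to an honest short exact sequence with free middle term keeps the finite-generation hypothesis intact at every step. This mirrors the arguments used in the proofs of \cite[Theorems 2.10 and 2.28]{Holm} referenced above.
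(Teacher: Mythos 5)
Your reduction of the problem to the inductive step, and your observation that $\Hom_R(-,P)$-exactness comes for free once each cokernel is known to be $G$-projective (via $\Ext^{\geq 1}(-,P)=0$ against projectives), are both fine and agree with the paper. The genuine gap is in the one step that carries the actual content of the lemma: producing a \emph{finitely generated free} middle term. You write ``Because $C^i$ is finitely generated, I can choose a finitely generated free module $F^i$ surjecting onto $Q$,'' but nothing forces the projective module $Q$ to be finitely generated: the strongly Gorenstein projective module containing $C^i$ as a direct summand (Theorem 2.3) is typically \emph{not} finitely generated even when $C^i$ is (it is built from infinitely many syzygies of a complete resolution), so the middle term $P$ of the sequence $0\to S\to P\to S\to 0$ from Proposition 2.2, and hence your $Q$, can be of infinite rank; no finitely generated free module surjects onto it. The parenthetical ``after discarding the complementary summand'' does not repair this, since discarding the summand of $S$ destroys the short exact sequence with projective middle term. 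Moreover, even granting a map $F^i\to Q$, the ``pushout'' you invoke is along a map \emph{into} $Q$ rather than out of $C^i$, and it does not produce a monomorphism $C^i\to F^i$ with $G$-projective, finitely generated cokernel; that is precisely what has to be proved, not assumed.

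The missing idea, which is how the paper proceeds, is a direct-summand argument inside a free module: start from $0\to M\to L\to M'\to 0$ with $L$ free and $M'$ $G$-projective (Holm, Proposition 2.4). Since $M$ is finitely generated, its image meets only finitely many basis elements of $L$, so $M\subseteq L_0$ for a finitely generated free direct summand $L_0$ with $L=L_0\oplus L_1$. Comparing the two exact sequences one gets $M'\cong L_0/M\oplus L_1$, so $L_0/M$ is $G$-projective by closure of Gorenstein projectives under direct summands, and it is finitely generated; the sequence $0\to M\to L_0\to L_0/M\to 0$ stays exact under $\Hom_R(-,F)$ for projective $F$ because $\Ext^1_R(L_0/M,F)=0$. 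Iterating this on $L_0/M$ gives the resolution. If you replace your ``surject a finitely generated free onto $Q$ and push out'' step by this ``cut down to a finitely generated free direct summand containing the image'' step, the rest of your outline goes through; without it, the finiteness assertion at each stage is unsupported.
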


\begin{proof} Let $M$ be a finitely generated  $G$-projective $R$-module.
By \cite[Proposition 2.4]{Holm}, there is an exact sequence of
$R$-modules $$0 \longrightarrow M \longrightarrow L \longrightarrow
M' \longrightarrow 0$$ where $L$ is a free $R$-module and $M'$ a
$G$-projective $R$-module.  We identify $M$ to a submodule of $L$
and we assume that $L$ admits basis $\{x_k,\; k \in K\}$. Since $M$
is finitely generated and each generator of $M$ is a finite linear
combination of finite subset of $\{x_k,\; k \in K\}$, we consider
$L_0$ a finitely generated free direct summand of $L$ which contains
$M$ and a free $R$-module $L_1$ such that $L=L_0\oplus L_1$. Then,
we have the following commutative diagram
$$(1)\qquad  \begin{array}{ccccccc}
   &  &  & 0 &  &  &  \\
   &  &  & \downarrow &  &  &  \\
  0\longrightarrow & M & \hookrightarrow & L_0 & \longrightarrow & L_0/M & \longrightarrow 0 \\
   & \parallel &  & \downarrow &  &  &  \\
  0\longrightarrow & M & \hookrightarrow & L & \longrightarrow & M'& \longrightarrow 0 \\
   &  &  & \downarrow &  & &  \\
   &  &  &  L_1 & &   & \\
   &  &  & \downarrow &  &  &  \\
   &  &  & 0 &  &  &  \\
\end{array}$$
 From \cite[Exercise 2.7 page 29]{Rotman} the diagram above can be completed as

$$(2)\qquad \begin{array}{ccccccc}
   &  &  & 0 &  & 0 &  \\
   &  &  & \downarrow &  & \downarrow &  \\
  0\longrightarrow & M & \longrightarrow & L_0 & \longrightarrow & L_0/M & \longrightarrow 0 \\
   & \parallel &  & \downarrow &  & \downarrow &  \\
  0\longrightarrow & M & \longrightarrow & L & \longrightarrow & M'& \longrightarrow 0 \\
   &  &  & \downarrow &  & \downarrow&  \\
   &  &  &  L_1 &= &  L_1 & \\
   &  &  & \downarrow &  & \downarrow &  \\
   &  &  & 0 &  & 0 &  \\
\end{array}$$

 In the right vertical exact sequence $L_1$ is a projective
 module and so $M'\cong L_0/M\oplus L_1$. Then, from
 \cite[Theorem 2.5]{Holm}, $L_0/M$ is a $G$-projective $R$-module. In addition, by \cite[Theorem 2.20]{Holm}, for every projective $R$-module $F$, the short sequence
 $$0\longrightarrow \Hom_R(L_0/M,F) \longrightarrow \Hom_R(L_0,F) \longrightarrow \Hom_R(M,F) \longrightarrow 0$$ is exact (since $\Ext(L_0/M,F)=0$). On the other hand, it is
 clear that $L_0/M$ is  finitely generated. Then, by repeating this procedure, $M$ admits a right co-proper resolution of finitely generated
 free $R$-modules.
 \end{proof}
\begin{proof}[Proof of Theorem \ref{fGPD}] Clearly $\fPD(R)\leq \fGPD(R)$ by \cite[Proposition
2.27]{Holm}. In first we claim that $\fGPD(R)\leq \fPD(R)+1$. So,
let $M$ be  a finitely presented module with $0<\gpd_R(M)=n<\infty$.
We may pick an exact sequence, $0\longrightarrow K'\longrightarrow
P_{n-1}\longrightarrow ...\longrightarrow P_0\longrightarrow M
\longrightarrow 0$, where $P_0,...,P_{n-1}$ are finitely generated
projective and $K'$ is finitely generated $G$-projective (since $R$
is coherent and by \cite[Proposition 2.7]{Holm}). On the other hand,
by Lemma \ref{lemma resolution}, there is an exact sequence
$0\longrightarrow K' \longrightarrow L^0\longrightarrow
...\longrightarrow L^{n-1}\longrightarrow G \longrightarrow 0$ where
$L^0,...,L^{n-1}$ are finitely generated free module and $G$ is
finitely generated $G$-projective modules and such that
$\Hom_R(-,Q)$ leaves this  sequence exact, whenever $Q$ is
projective. Thus, by \cite[Proposition 1.8]{Holm} there exist
homomorphisms, $L^i\rightarrow P_{n-1}$ for $i=0,...,n-1$, and
$G\rightarrow M$, such that the following diagram is commutative.
$$\begin{array}{ccccccccccc}
  0\longrightarrow & K' & \longrightarrow & L^0 & \longrightarrow & ... & \longrightarrow & L^{n-1} & \longrightarrow & G  & \longrightarrow 0 \\
   & \| &  & \downarrow  &  &  &  & \downarrow &  & \downarrow &  \\
 0\longrightarrow & K' & \longrightarrow & P_{n-1} & \longrightarrow & ... & \longrightarrow & P_0 & \longrightarrow & M  & \longrightarrow 0 \\
\end{array}$$

 The diagram gives a chain map between complexes,

$$\begin{array}{cccccccccc}
  0\longrightarrow  & L^0 & \longrightarrow & ... & \longrightarrow & L^{n-1} & \longrightarrow & G  & \longrightarrow 0 \\
   & \downarrow    &  &  &  & \downarrow &  &\downarrow &  \\
 0\longrightarrow   & P_{n-1} & \longrightarrow & ... & \longrightarrow & P_0 & \longrightarrow & M & \longrightarrow 0 \\
\end{array}$$
which induces an isomorphism in homology. Its mapping cone is exact
and all the modules in it, except for $P_0\oplus G$ (which is
finitely generated Gorenstein projective), are finitely presented
projective. Hence the kernel $K$ of $\varphi: P_0\oplus
G\twoheadrightarrow M$ satisfies $\pd_RK\leq n-1$ (and then
necessarily $\pd_RK=n-1$ by \cite[Propositions 2.18 and 2.27]{Holm})
and it is finitely presented since all $P_i$ and $L^i$ are finitely
generated. Thus, we get $\fGPD(R)\leq \fPD(R)+1$. Proving the
inequality $\fGPD(R)\leq
\fPD(R)$, we may therefore assume that $0< \fGPD(R)=m<\infty$.\\
Pick a finitely presented module $M$ with $\gpd_RM=m$. We wish to
find a finitely presented module $Q$ with $\pd_RQ=m$. By the above proof
there is an exact sequence $$0\longrightarrow K
\longrightarrow G \longrightarrow M \longrightarrow 0$$ where $G$ is
a finitely generated Gorenstein projective module  and $K$ is a
finitely presented module such that $\pd_RK=m-1$. Since $G$ is a
finitely generated Gorenstein projective module, there exists, by
Lemma \ref{lemma resolution}, a finitely generated free module $L$
with $G \subseteq L$, and since also  $K \subseteq G$, we can
consider the quotient $Q=L/K$ ($Q$ is finitely presented by
\cite[Theorem 2.5.1]{Glaz}). Note that $M\cong G/K$ is a submodule
of $Q$, and that we get a short exact sequence
$$0 \longrightarrow M \longrightarrow Q \longrightarrow
Q/M\longrightarrow 0.$$ If $Q$ is Gorenstein projective,
\cite[Proposition 2.18]{Holm} implies that $\gpd_R(Q/M)=m+1$, since
$\gpd_RM=m$. But this contradict the fact that $m=\fGPD(R)<\infty$
since $Q/M$ is finitely presented (by \cite[Theorem 2.5.1]{Glaz}
since $Q$ and $M$ are finitely presented and $R$ is coherent). Hence
$Q$ is not Gorenstein projective, in particular, $Q$ is not
projective. Therefore the short exact sequence $0 \longrightarrow K
\longrightarrow L \longrightarrow Q \longrightarrow 0$ shows that
$\pd_RQ=\pd_RK+1=m$.

\end{proof}

\begin{proposition} let $R$ be a coherent ring and let $x$ be a nonzero
divisor in $R$ contained in the intersection of the maximal ideals
of $R$. Then: \begin{enumerate}
    \item $\fGPD(R)=\fGPD(R/xR)+1$
     \item If $\gwd(R)<\infty$, then $\gwd(R)=\gwd(R/xR)+1$
\end{enumerate}
\end{proposition}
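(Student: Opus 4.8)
The plan is to prove (1) by reducing it, via Theorem \ref{fGPD}, to the classical change of rings for the small finitistic projective dimension, and then to deduce (2) from (1) together with the argument of Lemma \ref{gwd(R)>gwd(R/xR)}. First I would record two preliminary observations. From the exact sequence $0\to R\stackrel{x}\to R\to R/xR\to 0$ the module $R/xR$ is finitely presented over $R$, so $R/xR$ is a coherent ring by \cite[Theorem 4.1.1]{Glaz}; and $\pd_R(R/xR)\le 1$, with equality, since if $R/xR$ were projective then $xR$ would be a direct summand of $R$, which together with $x$ being a nonzerodivisor would force $x$ to be a unit, contradicting $x\in\bigcap\Max(R)$. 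Hence $\gpd_R(R/xR)=\pd_R(R/xR)=1$ by \cite[Proposition 2.27]{Holm}, and of course $\fd_R(R/xR)\le 1$.

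For (1) I would argue as follows. Since $R$ and $R/xR$ are both coherent, Theorem \ref{fGPD} gives $\fGPD(R)=\fPD(R)$ and $\fGPD(R/xR)=\fPD(R/xR)$, so it suffices to prove $\fPD(R)=\fPD(R/xR)+1$. For the inequality $\ge$, a finitely presented $R/xR$-module $N$ with $\pd_{R/xR}(N)=m<\infty$ is also finitely presented over $R$ (by \cite[Theorem 2.1.8]{Glaz}) and satisfies $\pd_R(N)=m+1<\infty$ by the classical change of rings for a surjection killed by a regular element; taking the supremum over such $N$ gives $\fPD(R)\ge\fPD(R/xR)+1$. For $\le$, I would take a finitely presented $R$-module $M$ with $\pd_R(M)<\infty$: if $x$ is a nonzerodivisor on $M$ then $M/xM$ is finitely presented over $R/xR$ and $\pd_R(M)=\pd_{R/xR}(M/xM)\le\fPD(R/xR)$; otherwise I peel off $0\to K\to P\to M\to 0$ with $P$ finitely generated projective, note that $K$ is finitely presented (coherence) and that $\Tor_1^R(K,R/xR)=\Tor_2^R(M,R/xR)=0$ because $\fd_R(R/xR)\le 1$, so $x$ is a nonzerodivisor on $K$ by the argument of Lemma \ref{lemma x is nonzero}, and then $\pd_R(M)=\pd_R(K)+1\le\fPD(R/xR)+1$ by the previous case. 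Taking suprema finishes (1).

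For (2) I would assume $\gwd(R)=n<\infty$. Then $\fGPD(R)=\gwd(R)=n$ (equality holds since $\gwd(R)$ is finite), so part (1) gives $\fGPD(R/xR)=n-1$, and it remains to show $\gwd(R/xR)=n-1$. The inequality $\gwd(R/xR)\le n-1$ I would get by mimicking Lemma \ref{gwd(R)>gwd(R/xR)}: since $\gpd_R(R/xR)=1$ and $R/xR$ is a finitely presented $R$-module, Lemma \ref{n-FC} forces $n\ge 1$; for any finitely presented $R/xR$-module $N$ (which is finitely presented over $R$ as well) the isomorphism \cite[Theorem 1.3.5]{Glaz} gives $\Ext^{n}_{R/xR}(N,R/xR)\cong\Ext^{n+1}_R(N,R)=0$, the last vanishing because $R$ is $n$-FC (Lemma \ref{n-FC}); hence $R/xR$ is $(n-1)$-FC and $\gwd(R/xR)\le n-1$ by Lemma \ref{n-FC}. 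In particular $\gwd(R/xR)$ is finite, so $\gwd(R/xR)=\fGPD(R/xR)=n-1$, and therefore $\gwd(R)=n=\gwd(R/xR)+1$.

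The hard part will be the single non-formal input to (1), namely the classical equality $\fPD(R)=\fPD(R/xR)+1$ — concretely the two change-of-rings statements for the ordinary projective dimension used above ($\pd_{R/xR}(M/xM)=\pd_R(M)$ when $x$ is a nonzerodivisor on both $R$ and $M$ and $\pd_R(M)<\infty$, and $\pd_R(N)=\pd_{R/xR}(N)+1$ when $xN=0$, $x$ is $R$-regular and $\pd_{R/xR}(N)<\infty$) — which must be applied with the finite-presentation and coherence hypotheses kept in force at each step. Granting that, the rest is formal, resting only on Theorem \ref{fGPD}, Lemma \ref{n-FC}, and the bookkeeping already done in Lemma \ref{gwd(R)>gwd(R/xR)}; alternatively one could avoid the classical citation and prove (1) directly by the Gorenstein change-of-rings arguments behind Lemma \ref{thm coherent M/XM} and the techniques of \cite{Holm}, at the price of repeating them.
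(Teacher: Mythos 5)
Your proof is correct and follows essentially the same route as the paper: part (1) is reduced via Theorem \ref{fGPD} to the classical equality $\fPD(R)=\fPD(R/xR)+1$ (which the paper simply cites as \cite[Corollary 3.1.4]{Glaz} and you re-derive from the two change-of-rings theorems), and part (2) is obtained by the same $(n-1)$-FC argument through \cite[Theorem 1.3.5]{Glaz} and Lemma \ref{n-FC} as in Lemma \ref{gwd(R)>gwd(R/xR)}, combined with the identification $\fGPD=\gwd$ in the finite case. The only difference is expository: you spell out the classical $\fPD$ computation and the finiteness bookkeeping that the paper leaves to the citation.
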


\begin{proof} The first equality follows from Theorem \ref{fGPD} and
\cite[Corollary 3.1.4]{Glaz}. Now assume that $\gwd(R)=n$ is finite.
We claim that $\gwd(R/xR)$ is finite. In first see, by \cite[Theorem
4.1.1(1)]{Glaz}, that $R/xR$ is also coherent since $R/xR$ is a
finitely presented $R$-module (from the short exact sequence
$0\longrightarrow R\stackrel{x}\longrightarrow R \longrightarrow
R/xR \longrightarrow 0$). Using \cite[Theorem 1.3.3]{Glaz} and
\cite[Proposition 2.27]{Holm}, we have
$$\gpd_{R}(R/xR)=\pd_{R}(R/xR)=1$$ Then, from Lemma \ref{n-FC},
$\gwd(R)=n\geq 1$ since $R/xR$ is a finitely presented $R$-module.
 Now, let $M$ be a finitely presented $R/xR$-module. Then, by
 \cite[Theorem 2.1.8]{Glaz}, $M$ is a finitely presented
 $R$-module. Thus, by \cite[Theorem 1.3.5]{Glaz} and Lemma \ref{n-FC},
 $\Ext^n_R(M,R/xR)=\Ext^{n+1}_{R}(M,R)=0$. Therefore, $R/xR$ is $(n-1)-FC$. Hence, by Lemma \ref{n-FC},  $\gwd(R/xR)\leq n-1<\infty$.
 So, by Theorem \ref{fGPD} and
\cite[Corollary 3.1.4]{Glaz}, we have
$$\gwd(R)=\fGPD(R)=\fGPD(R/xR)+1=\gwd(R/xR)+1.$$ Now the assertion holds.
\end{proof}




\bigskip\bigskip


\end{document}